\newtheorem{theorem}{\textbf{\textsc{Theorem}}}[section]
\newtheorem{definition}[theorem]{\textbf{\textsc{Definition}}}
\newtheorem{lemma}[theorem]{\textbf{\textsc{Lemma}}}
\newtheorem{proposition}[theorem]{\textbf{\textsc{Proposition}}}
\newenvironment{proof}
{\noindent\mbox{\textsf{\textbf{\textsc{Proof}}}:}}
{\hfill{\scriptsize \mbox{\underline{\texttt{\em QED}\,}$\!\big|$}}\bigskip}
\title{A Reunion of \textsc{G\"odel,  Tarski, Carnap}, and \textsc{Rosser}}
\author{{\sc Saeed \  Salehi  } \\
\textrm{\large Department of Mathematics, Statistics, and Computer Science,   University of Tabriz,}\\ \textrm{\large  Bahman 29$^{\,th}$ Boulevard,  P.O.Box~51666--16471, Tabriz, IRAN.  \, E-mail:\!~\textsf{\normalsize salehipour@tabrizu.ac.ir}} }
\begin{document}

\maketitle

\pagestyle{fancy}
\lhead[ ]{ \thepage\quad{\sc Saeed Salehi} (\textgoth{2022}) }
\chead[ ]{ }
\rhead[ ]{ {\em A Reunion of G{\scriptsize \"ODEL}, T{\scriptsize ARSKI}, C{\scriptsize ARNAP}, and R{\scriptsize OSSER}}}
\lfoot[ ]{ }
\cfoot[ ]{ }
\rfoot[ ]{ }

\begin{abstract}
We unify
{\sc G\"odel}'s First Incompleteness Theorem (1931),
{\sc Tarski}'s  Undefinability 
Theorem (1933),
{\sc G\"odel-Carnap}'s Diagonal Lemma (1934),
and {\sc Rosser}'s
(strengthening of {\sc G\"odel}'s first) Incompleteness Theorem (1936),
whose proofs resemble much and use almost the same technique.

\bigskip

\noindent
{\bf Keywords}:
{\sc G\"odel}’s First Incompleteness Theorem,
{\sc Tarski}'s Undefinability Theorem,
{\sc Carnap}'s Diagonal Lemma,
{\sc Rosser}'s Incompleteness Theorem,
{\sc Chaitin}'s Proof of the Incompleteness Theorem.

\noindent
{\bf 2020 AMS MSC}:
03F40.   	
\end{abstract}
%


\section{Introduction}\label{sec:intro}

{\Large B}{\large ETWEEN} {\large\textsc{1930 and 1936}}, \textsc{at the} beginning of
the Golden Age of Mathematical Logic, there appeared four fundamental theorems:
\begin{enumerate}
  \item {\sc G\"odel}'s First Incompleteness Theorem in  1931; see \cite{Godel31}.\\[-4ex]
   \item ({\sc G\"odel}-){\sc Tarski}'s Truth-Undefinability  Theorem in 1933; see \cite{Murawski98,Gruber16} and \cite[f.~25 on p.~363]{Godel34}.\\[-4ex]
   \item ({\sc G\"odel}-){\sc Carnap}'s Diagonal Lemma in   1934; see \cite{Carnap34} and \cite[f.~23 on p.~363]{Godel34}.\\[-4ex]
  \item ({\sc G\"odel}-){\sc Rosser}'s Incompleteness  Theorem in  1936; see \cite{Rosser36} and \cite[p.~370]{Godel34}.
\end{enumerate}
A main part of the classic proofs of these theorems uses a common trick that constructs some suitable self-referential sentences.\footnote{\!One other basic result around that time (1938) which uses quite a similar  technique  was {\sc Kleene}'s Recursion Theorem \cite{Kleene38}; we do not consider it here, as we know of no equivalent formulation in the form of Theorem~\ref{thm:main} below.}
So, it is natural to conjecture that these theorems are equivalent, in the sense that there is a sufficiently general framework in which either all these four theorems hold together, or none holds. This paper is a continuation of \cite{Salehi22} where
some semantic forms of 1--3 were proved to be equivalent. Here, we present some syntactic formulations of 1--4 and prove their equivalence (in~\S~\ref{sec:main}); we also provide a framework in which none of 1--4 holds (it is too well known that they all  hold for sufficiently strong theories). Having this equivalence has the advantage that one can translate a proof for any of 1--4 to get an alternative proof for another. This was done earlier in \cite{Salehi20} where an alternative proof for the semantic Diagonal Lemma, and a weak syntactic formulation of it, was provided. Here, we will also answer a question left open there (in~\S~\ref{sec:app}, the Appendix) and will see one more different proof for the weak syntactic diagonal lemma (in~\S~\ref{sec:new}).

\section{A Unification of the Four Theorems}\label{sec:main}
Let us begin with a definition for the standard notion of {\sc G\"odel} coding, and fix our language.
\begin{definition}[\textsf{\textsc{G\"odel}} coding, arithmetical languages, interpretation, representing numbers by terms]
\label{def:codelang}
\noindent

\noindent
For a first-order language $\mathcal{L}$, a {\sc G\"odel} coding on $\mathcal{L}$ is  a computable injection from the syntactical expressions (finite strings) over $\mathcal{L}$ into the set of natural number $\mathbb{N}$.

\noindent
Let $\mathcal{L}^{\boldsymbol\ast}=\{\boldsymbol0,\boldsymbol1,
\boldsymbol<,
\boldsymbol+,\boldsymbol\nu\}$ be the first-order language that contains the constant symbols $\boldsymbol0,\boldsymbol1$, the binary relation symbols $\boldsymbol<$, the binary function symbol $\boldsymbol+$, and the unary function symbol $\boldsymbol\nu$.
Let the language of arithmetic be
$\{\boldsymbol0,\boldsymbol1,
\boldsymbol<,
\boldsymbol+,\boldsymbol\times\}$,
where $\boldsymbol\times$ is a binary function symbol.

\noindent
The symbols $\boldsymbol0,\boldsymbol1,
\boldsymbol<,
\boldsymbol+,\boldsymbol\times$
are interpreted as usual over $\mathbb{N}$, and the interpretation of $\boldsymbol\nu$, which is a function $\nu\colon\mathbb{N}\!\rightarrow\!\mathbb{N}$, depends on a fixed {\sc G\"odel} coding:  $\nu(n)$ is the code of  $\neg\sigma$ (the negation symbol $\neg$ appended to the left side of $\sigma$) when $n$ is the code of a sentence $\sigma$ (and $\nu(n)$ can be any arbitrary number when $n$ is not the code of any sentence).

\noindent
For a natural number $n\!\in\!\mathbb{N}$, let $\overline{n}$ denote the closed term that represents $n$, which is  $\boldsymbol0$ if $n\!=\!0$, is $\boldsymbol1$ if $n\!=\!1$, and is  $\boldsymbol
1\boldsymbol+\cdots\boldsymbol+\boldsymbol1$ ($n$ times) if $n\!>\!1$.
\hfill\ding{71}\end{definition}

 So, if $\ulcorner\!\sigma\!\urcorner$ denotes the code of a sentence $\sigma$, then $\nu(\ulcorner\!\sigma\!\urcorner)
 \!=\!
 \ulcorner\!\neg\sigma\!\urcorner$.
\begin{definition}[arithmetical theories]
\label{def:theory}
\noindent

\noindent
Let $\textit{Q}$ denote Robinson's Arithmetic over the language of arithmetic.

\noindent
Fix a G\"odel coding $\eta\mapsto\ulcorner\!\eta\!\urcorner$; for simplicity,  let us denote the closed term $\overline{\ulcorner\!\eta\!\urcorner}$
 by  $^{\scriptsize\textbf{\textSFxxxix}}\eta
^{\scriptsize\textbf{\textSFxxv}}$.
Let $\textit{Q}^{\boldsymbol-}$ be the $\mathcal{L}^{\boldsymbol\ast}$-theory axiomatized by

$({\sf A}_1)$:\; $\forall x (x\!\boldsymbol<\!\overline{n}\vee
x\!=\!\overline{n}\vee
\overline{n}\!\boldsymbol<\!x)$,
\,  for every $n\!\in\!\mathbb{N}$.

$({\sf A}_2)$:\; $\forall x (x\!\boldsymbol<\!\overline{n}\leftrightarrow
\bigvee\!\!\!\!\!\bigvee_{i<n}x\!=\!\overline{i}\,)$, \,  for every $n\!\in\!\mathbb{N}$.

$({\sf A}_3)$:\; $\boldsymbol\nu(^{\scriptsize\textbf{\textSFxxxix}}\sigma^{\scriptsize\textbf{\textSFxxv}}) =\,
 ^{\scriptsize\textbf{\textSFxxxix}}
 \neg\sigma\,\!\,^{\scriptsize\textbf{\textSFxxv}}$,
\,  for every $\mathcal{L}^{\boldsymbol\ast}$-sentence $\sigma$.
\hfill\ding{71}\end{definition}

\begin{theorem}[$\textrm{\textsc{G\"odel}}_T\!\equiv\!
\textrm{\textsc{Tarski}}_T\!\equiv\!
\textrm{\textsc{Carnap}}_T\!\equiv\!
\textrm{\textsc{Rosser}}_T$]\label{thm:main}
\noindent

\noindent
For every theory $T$ that extends $\textit{Q}^{\boldsymbol-}$, and a fixed coding,  the following are equivalent:
\begin{enumerate}

\item $\textrm{\textsc{G\"odel}}_T$: If $U$ is a consistent extension of $T$ such that for some formula $\Psi(x)$, with the only variable $x$, we have $U\vdash\sigma$ iff $U\vdash
      \Psi(^{\scriptsize\textbf{\textSFxxxix}}\sigma
^{\scriptsize\textbf{\textSFxxv}})$
for every sentence $\sigma$, then $U$ is incomplete.

\item $\textrm{\textsc{Tarski}}_T$: For every formula $\Upsilon(x)$, with the only free variable $x$, the theory $T$ is inconsistent with the set $\{\Upsilon(^{\scriptsize\textbf{\textSFxxxix}}\sigma^{\scriptsize\textbf{\textSFxxv}})\!\leftrightarrow\!\sigma\mid\sigma\textrm{ is a sentence}\}$.

\item $\textrm{\textsc{Carnap}}_T$: For every formula $\Lambda(x)$, with the only free variable $x$, there are finitely many sentences $\{A_i\}_i$ such that $T\vdash\bigvee\!\!\!\!\!\bigvee_i
      \big(\Lambda(^{\scriptsize\textbf{\textSFxxxix}}A_i\,\!
^{\scriptsize\textbf{\textSFxxv}})\!\leftrightarrow\!A_i\big)$.

\item $\textrm{\textsc{Rosser}}_T$: If $U$ is a consistent extension of $T$ such that for some formula $\Theta(x,y)$, with the shown free variables, we have for every sentence $\sigma$ that  (i) if  $U\vdash\sigma$ then $U\vdash\Theta(\overline{m},
      ^{\scriptsize\textbf{\textSFxxxix}}\!\sigma
^{\scriptsize\textbf{\textSFxxv}})$
for some number $m\!\in\!\mathbb{N}$, and (ii) if $U\nvdash\sigma$ then $U\vdash\neg\Theta(\overline{n},
      ^{\scriptsize\textbf{\textSFxxxix}}\!\sigma
^{\scriptsize\textbf{\textSFxxv}})$
      for every number $n\!\in\!\mathbb{N}$, then $U$ is incomplete.
\hfill $\diamond$
\end{enumerate}
\end{theorem}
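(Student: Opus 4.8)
\noindent\emph{Proof proposal.} The plan is to treat $\textrm{\textsc{Carnap}}_T$ (the weak syntactic diagonal lemma) as the hub and, for the fixed $T\!\supseteq\!\textit{Q}^{\boldsymbol-}$ and fixed coding, to establish the six implications $\textrm{\textsc{Carnap}}_T\!\Rightarrow\!\textrm{\textsc{Tarski}}_T\!\Rightarrow\!\textrm{\textsc{Carnap}}_T$, then $\textrm{\textsc{Carnap}}_T\!\Rightarrow\!\textrm{\textsc{G\"odel}}_T\!\Rightarrow\!\textrm{\textsc{Carnap}}_T$, and finally $\textrm{\textsc{Carnap}}_T\!\Rightarrow\!\textrm{\textsc{Rosser}}_T\!\Rightarrow\!\textrm{\textsc{G\"odel}}_T$; together these make all four statements equivalent. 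Throughout I write $\overline{\ulcorner\sigma\urcorner}$ for the numeral naming the code of $\sigma$, and I use repeatedly that by $({\sf A}_3)$ one has $T\vdash\boldsymbol\nu(\overline{\ulcorner\sigma\urcorner})=\overline{\ulcorner\neg\sigma\urcorner}$, which lets any formula be freely shifted between a code and the code of its negation.

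First the ``propositional'' implications. For $\textrm{\textsc{Carnap}}_T\!\Rightarrow\!\textrm{\textsc{Tarski}}_T$, given $\Upsilon$ I would feed $\Lambda:=\neg\Upsilon$ to $\textrm{\textsc{Carnap}}_T$, obtaining finitely many $A_i$ with $T\vdash\bigvee_i\big(A_i\leftrightarrow\neg\Upsilon(\overline{\ulcorner A_i\urcorner})\big)$; since the Tarski biconditional $\Upsilon(\overline{\ulcorner A_i\urcorner})\leftrightarrow A_i$ refutes the $i$-th disjunct, $T$ together with the Tarski set refutes the whole disjunction and is inconsistent. The converses $\textrm{\textsc{Tarski}}_T\!\Rightarrow\!\textrm{\textsc{Carnap}}_T$ and $\textrm{\textsc{G\"odel}}_T\!\Rightarrow\!\textrm{\textsc{Carnap}}_T$ I would run contrapositively: if $\textrm{\textsc{Carnap}}_T$ fails for some $\Lambda$ then by compactness $S:=T\cup\{\Lambda(\overline{\ulcorner\sigma\urcorner})\leftrightarrow\neg\sigma\mid\sigma\}$ is consistent. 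Choosing $\Upsilon:=\neg\Lambda$ turns $S$ into $T$ plus the Tarski set, against $\textrm{\textsc{Tarski}}_T$; and extending $S$ by Lindenbaum to a complete consistent $U\!\supseteq\!T$, the formula $\Psi:=\neg\Lambda$ satisfies ``$U\vdash\sigma$ exactly when $U\vdash\Psi(\overline{\ulcorner\sigma\urcorner})$'', so $\textrm{\textsc{G\"odel}}_T$ would make $U$ incomplete, absurd. The direction $\textrm{\textsc{Carnap}}_T\!\Rightarrow\!\textrm{\textsc{G\"odel}}_T$ is the mirror image: assuming the consistent $U$ complete, apply $\textrm{\textsc{Carnap}}_T$ to $\neg\Psi$ and refute each disjunct using the defining property of $\Psi$ and completeness. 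Finally $\textrm{\textsc{Rosser}}_T\!\Rightarrow\!\textrm{\textsc{G\"odel}}_T$ is immediate: for $\Psi$ as in $\textrm{\textsc{G\"odel}}_T$ over a consistent $U$ assumed complete, the two-place $\Theta(x,y):=\Psi(y)$ satisfies clause~(i) trivially and clause~(ii) because completeness upgrades $U\nvdash\sigma$ to $U\vdash\neg\Psi(\overline{\ulcorner\sigma\urcorner})$, so $\textrm{\textsc{Rosser}}_T$ declares $U$ incomplete.

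The heart of the argument, and the step I expect to be the real obstacle, is $\textrm{\textsc{Carnap}}_T\!\Rightarrow\!\textrm{\textsc{Rosser}}_T$, where the genuine Rosser trick and the order axioms $({\sf A}_1),({\sf A}_2)$ finally enter. Given a consistent $U\!\supseteq\!T$ carrying a $\Theta$ as in $\textrm{\textsc{Rosser}}_T$, I would argue by contradiction from the assumption that $U$ is complete, and introduce the Rosser provability predicate
\[ R(x):=\exists u\big(\Theta(u,x)\wedge\forall w(w\boldsymbol< u\to\neg\Theta(w,\boldsymbol\nu(x)))\big). \]
The crux is to show that inside the complete consistent $U$ this $R$ behaves exactly like a $\textrm{\textsc{G\"odel}}$-predicate: (a) $U\vdash\sigma$ implies $U\vdash R(\overline{\ulcorner\sigma\urcorner})$, using a witness $\overline m$ from clause~(i), the facts $U\vdash\neg\Theta(\overline n,\overline{\ulcorner\neg\sigma\urcorner})$ from clause~(ii), and $({\sf A}_2)$ to discharge the finitely many $w\boldsymbol<\overline m$; and (b) $U\nvdash\sigma$ implies $U\vdash\neg R(\overline{\ulcorner\sigma\urcorner})$, the delicate half. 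For (b) I would take the $\neg\sigma$-witness $\overline m$ from clause~(i) (legitimate since completeness gives $U\vdash\neg\sigma$) and, for an arbitrary $u$, split by the trichotomy $({\sf A}_1)$ of $u$ against $\overline m$: when $u\boldsymbol< \overline m$ or $u=\overline m$ the antecedent $\Theta(u,\overline{\ulcorner\sigma\urcorner})$ is refuted by clause~(ii) through $({\sf A}_2)$, while when $\overline m\boldsymbol< u$ the value $w=\overline m$ witnesses $\Theta(w,\overline{\ulcorner\neg\sigma\urcorner})$; in both cases $U\vdash\neg R(\overline{\ulcorner\sigma\urcorner})$.

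With (a) and (b) in hand, $R$ witnesses the hypothesis of $\textrm{\textsc{G\"odel}}_T$ for the complete $U$, so $\textrm{\textsc{G\"odel}}_T$ (already available from $\textrm{\textsc{Carnap}}_T$) pronounces $U$ incomplete, contradicting the completeness assumed for the reductio and thereby proving $\textrm{\textsc{Rosser}}_T$. The subtlety to keep in view is that step~(b) genuinely consumes completeness: without it $U\nvdash\sigma$ yields no proof of $\neg\sigma$, the Rosser comparison between the two proof-streams collapses, and $R$ loses the $\textrm{\textsc{G\"odel}}$-property; this is precisely why the whole implication is organised as a reductio on the completeness of $U$ rather than as a direct construction.
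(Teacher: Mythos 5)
Your proposal is correct, but it takes a genuinely different route from the paper's. The paper proves the single cycle $\textrm{\textsc{G\"odel}}_T\Rightarrow\textrm{\textsc{Tarski}}_T\Rightarrow\textrm{\textsc{Carnap}}_T\Rightarrow\textrm{\textsc{Rosser}}_T\Rightarrow\textrm{\textsc{G\"odel}}_T$, whereas you make $\textrm{\textsc{Carnap}}_T$ a hub and prove six implications. The real divergence is the Rosser step. The paper's $\textrm{\textsc{Carnap}}_T\Rightarrow\textrm{\textsc{Rosser}}_T$ applies $\textrm{\textsc{Carnap}}_T$ to $\Lambda(x)=\forall y\big[\Theta(y,x)\rightarrow\exists z\!<\!y\,\Theta(z,\boldsymbol\nu(x))\big]$ --- which is logically your $\neg R(x)$ --- then uses the \emph{disjunction property} of the (assumed) complete theory $U$ to extract a single fixed point $\rho$ with $U\vdash\Lambda(\ulcorner\rho\urcorner)\leftrightarrow\rho$, and finishes with the classical two-case Rosser argument ($U\vdash\rho$ or $U\vdash\neg\rho$), each case contradicting consistency via $({\sf A}_1)$--$({\sf A}_3)$. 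You never extract a fixed point in this step and never invoke the disjunction property: your clauses (a) and (b) run the same Rosser trick (trichotomy from $({\sf A}_1)$, the finite case-split from $({\sf A}_2)$, the negation-code equation from $({\sf A}_3)$) uniformly in $\sigma$, concluding that $R$ is a G\"odel-style predicate on the complete $U$, after which your separately proved $\textrm{\textsc{Carnap}}_T\Rightarrow\textrm{\textsc{G\"odel}}_T$ --- an implication the paper never proves directly, and which you handle by refuting every Carnap disjunct using completeness rather than by the disjunction property --- delivers the contradiction. What your route buys is modularity: the diagonal/fixed-point work is done once, and Rosser becomes a reduction of one provability predicate to another. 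What the paper's route buys is economy (four implications instead of six) and a Rosser step that is visibly the classical Rosser proof. Your remaining implications match the paper's up to rearrangement: your $\textrm{\textsc{Tarski}}_T\Rightarrow\textrm{\textsc{Carnap}}_T$ is the paper's $(2\Rightarrow3)$ read contrapositively (same propositional tautology, with compactness made explicit), your $\textrm{\textsc{G\"odel}}_T\Rightarrow\textrm{\textsc{Carnap}}_T$ uses a Lindenbaum completion where the paper's $(1\Rightarrow2)$ takes ${\rm Th}(\mathfrak{M})$ of a model (same idea, syntactic vs.\ semantic), and your $\textrm{\textsc{Rosser}}_T\Rightarrow\textrm{\textsc{G\"odel}}_T$ is the paper's $(4\Rightarrow1)$.

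One cosmetic defect: in $\textrm{\textsc{Rosser}}_T\Rightarrow\textrm{\textsc{G\"odel}}_T$ you set $\Theta(x,y):=\Psi(y)$, in which $x$ does not occur free, although $\textrm{\textsc{Rosser}}_T$ asks for a formula ``with the shown free variables''; conjoin $x\!=\!x$, as the paper does, to repair this triviality.
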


Before proving the theorem, let us explain its content a bit.

(1) In $\textrm{\textsc{G\"odel}}_T$, the formula $\Psi(x)$ is a kind of provability predicate, in the sense that if $U$ proves $\sigma$, then $U$ can verify that it proves $\sigma$; and conversely (here $U$ is assumed to possess some soundness) if $U$ proves that $\sigma$ is $U$-provable, then $\sigma$ is $U$-provable in reality.

(2) In $\textrm{\textsc{Tarski}}_T$,  the formula $\Upsilon(x)$ is a kind of hypothetical truth predicate, and our version of the theorem is syntactic ({\sc Tarski}'s theorem is usually formulated semantically, in the form that {\em the truth predicate is not arithmetically definable}; cf.\ \cite{Salehi22}).

(3) In  $\textrm{\textsc{Carnap}}_T$, the existence of finitely many (partial) fixed points for a given formula $\Lambda(x)$ has been claimed. This is a weak version of the syntactic Diagonal Lemma (see \cite[Theorem~2.5]{Salehi20}); the strong diagonal lemma states the existence of one fixed point sentence ($A$ such that $\textit{Q}\vdash\Lambda
(^{\scriptsize\textbf{\textSFxxxix}}A\,
^{\scriptsize\textbf{\textSFxxv}})\!\leftrightarrow\!A$).
Let us note that our weak version implies the Semantic Diagonal Lemma  (see e.g.\ \cite[Theorem 2.3]{Salehi20} or \cite[Definition 2.1]{Salehi22}).

(4) In  $\textrm{\textsc{Rosser}}_T$, the formula $\Theta(x,y)$ is a kind of proof predicate: $x$ codes a $U$-proof of $y$. The assumptions (i) and (ii) indicate that $U$-proofs are bi-representable in $U$: if $U$ proves $\sigma$ and $m$ is the code of its proof, then $U$ verifies this; and if $\sigma$ is not $U$-provable, then $U$ verifies that no number can code a $U$-proof of $\sigma$.

\begin{proof}

\qquad
$\boldsymbol(\!\!(1\Longrightarrow 2)\!\!\boldsymbol)\!\!:$
If $\textrm{\textsc{Tarski}}_T$ does not hold, then let $\mathfrak{M}$ be a model of
$T+\{\Upsilon(^{\scriptsize\textbf{\textSFxxxix}}\sigma
^{\scriptsize\textbf{\textSFxxv}})
      \leftrightarrow\sigma\mid\sigma\textrm{ is a sentence}\}$,
      and put $U={\rm Th}(\mathfrak{M})$. Now, for every sentence $\sigma$ we have
     $U\vdash\sigma$ iff $\mathfrak{M}\vDash\sigma$ iff $\mathfrak{M}\vDash\Upsilon(^{\scriptsize\textbf{\textSFxxxix}}\sigma
^{\scriptsize\textbf{\textSFxxv}})$ iff  $U\vdash \Upsilon(^{\scriptsize\textbf{\textSFxxxix}}\sigma
^{\scriptsize\textbf{\textSFxxv}})$.
But $U$ is a complete extension of $T$; this contradicts $\textrm{\textsc{G\"odel}}_T$.

\qquad
$\boldsymbol(\!\!(2\Longrightarrow 3)\!\!\boldsymbol)\!\!:$
For given $\Lambda(x)$, let $\Upsilon(x)=\neg\Lambda(x)$. By the inconsistency of the theory $T$ with the set  $\{\Upsilon(^{\scriptsize\textbf{\textSFxxxix}}\sigma
^{\scriptsize\textbf{\textSFxxv}})
      \leftrightarrow\sigma\mid\sigma\textrm{ is a sentence}\}$,
      we have $T\vdash\neg\bigwedge\!\!\!\!\!\bigwedge_i
      \big(\Upsilon(^{\scriptsize\textbf{\textSFxxxix}}A_i\,\!
^{\scriptsize\textbf{\textSFxxv}})
      \leftrightarrow A_i\big)$
      for some finitely many sentences $\{A_i\}_i$.
      By the propositional tautology $\neg\bigwedge\!\!\!\!\!\bigwedge_i(\neg p_i\!\leftrightarrow\!q_i)\!\equiv\!
      \bigvee\!\!\!\!\!\bigvee_i(p_i\!\leftrightarrow\!q_i)$,
we have  $T\vdash\bigvee\!\!\!\!\!\bigvee_i
      \big(\Lambda(^{\scriptsize\textbf{\textSFxxxix}}A_i\,\!
^{\scriptsize\textbf{\textSFxxv}})
      \leftrightarrow A_i\big)$.

\qquad
$\boldsymbol(\!\!(3\Longrightarrow 4)\!\!\boldsymbol)\!\!:$
Let $U$ and $\Theta$ satisfy the assumptions, and assume, for the sake of a contradiction, that $U$ is a complete theory.
Let $\Lambda(x)$ be the formula $\forall y\big[
\Theta(y,x)\!\rightarrow\!\exists z\!<\!y\,\Theta\big(z,\boldsymbol\nu(x)\big)\big]$.
By $\textrm{\textsc{Carnap}}_T$, there are finitely many sentences $\{A_i\}_i$ such that $T\vdash\bigvee\!\!\!\!\!\bigvee_i
      \big(\Lambda(^{\scriptsize\textbf{\textSFxxxix}}A_i\,\!
^{\scriptsize\textbf{\textSFxxv}})
      \leftrightarrow A_i\big)$.
      Since complete theories have the disjunction property, then there exists one sentence $\rho$ such that (\ding{92}) $U\vdash\Lambda(^{\scriptsize\textbf{\textSFxxxix}}\rho\,
^{\scriptsize\textbf{\textSFxxv}})
      \leftrightarrow\rho$.
      Also,   by the completeness of $U$ we have either (I) $U\vdash\rho$, or (II) $U\vdash\neg\rho$.

(I)\; If $U\vdash\rho$, then by (\ding{92}) and $({\sf A}_3, \textrm{Definition}~\ref{def:theory})$ we have  $U\vdash\forall y\big[
\Theta(y,^{\scriptsize\textbf{\textSFxxxix}}\rho\,
^{\scriptsize\textbf{\textSFxxv}})\!\rightarrow\!\exists z\!<\!y\,\Theta(z,^{\scriptsize\textbf{\textSFxxxix}}\neg\rho\,
^{\scriptsize\textbf{\textSFxxv}})\big]$.
By (i) in the assumption, there is some $m\!\in\!\mathbb{N}$ such that $U\vdash\Theta(\overline{m},^{\scriptsize\textbf{\textSFxxxix}}\rho\,
^{\scriptsize\textbf{\textSFxxv}})$.
So, $U\vdash\exists z\!<\!\overline{m}\,\Theta(z,^{\scriptsize\textbf{\textSFxxxix}}\neg\rho\,
^{\scriptsize\textbf{\textSFxxv}})$.
On the other hand, by $U\nvdash\neg\rho$ and (ii) in the assumption, we have $U\vdash\neg\Theta(\overline{n},^{\scriptsize\textbf{\textSFxxxix}}\neg\rho\,
^{\scriptsize\textbf{\textSFxxv}})$
for each $n\!\in\!\mathbb{N}$. Thus, by $({\sf A}_2, \textrm{Definition}~\ref{def:theory})$ we have
$U\vdash\forall z\!<\!\overline{m}\neg\Theta(z,^{\scriptsize\textbf{\textSFxxxix}}\neg\rho\,
^{\scriptsize\textbf{\textSFxxv}})$. Whence, $U$ is inconsistent; a contradiction.

(II)\; If $U\vdash\neg\rho$, then by (\ding{92}) and $({\sf A}_3, \textrm{Definition}~\ref{def:theory})$ we have $U\vdash\exists y\big[
\Theta(y,^{\scriptsize\textbf{\textSFxxxix}}\rho\,
^{\scriptsize\textbf{\textSFxxv}})\!\wedge\!\forall z\!<\!y\,\neg\Theta(z,^{\scriptsize\textbf{\textSFxxxix}}\neg\rho\,
^{\scriptsize\textbf{\textSFxxv}})\big]$.
By (i) in the assumption, there is some $m\!\in\!\mathbb{N}$ such that $U\vdash\Theta(\overline{m},^{\scriptsize\textbf{\textSFxxxix}}\neg\rho\,
^{\scriptsize\textbf{\textSFxxv}})$.
So, $({\sf A}_1, \textrm{Definition}~\ref{def:theory})$ implies that $U\vdash\exists y\!\leqslant\!\overline{m}\,\Theta(y,^{\scriptsize\textbf{\textSFxxxix}}\rho\,
^{\scriptsize\textbf{\textSFxxv}})$.
On the other hand, by $U\nvdash\rho$ and (ii) in the assumption, we have $U\vdash\neg\Theta(\overline{n},^{\scriptsize\textbf{\textSFxxxix}}\rho\,
^{\scriptsize\textbf{\textSFxxv}})$
for each $n\!\in\!\mathbb{N}$. Thus, by $({\sf A}_2, \textrm{Definition}~\ref{def:theory})$ we have
$U\vdash\forall y\!\leqslant\!\overline{m}\neg\Theta(y,^{\scriptsize\textbf{\textSFxxxix}}\rho\,
^{\scriptsize\textbf{\textSFxxv}})$; a contradiction.

\qquad
$\boldsymbol(\!\!(4\Longrightarrow 1)\!\!\boldsymbol)\!\!:$
If $\textrm{\textsc{G\"odel}}_T$ does not hold for the theory $U$ and formula $\Psi(x)$, let $\Theta(x,y)$ be $\Psi(y)\wedge(x\!=\!x)$.
Now, by the assumption,  for every sentence $\sigma$ we have
(i) if $U\vdash\sigma$, then  $U\vdash\Psi(^{\scriptsize\textbf{\textSFxxxix}}\sigma
^{\scriptsize\textbf{\textSFxxv}})$,
thus   $U\vdash\Theta(\overline{m},^{\scriptsize\textbf{\textSFxxxix}}\sigma
^{\scriptsize\textbf{\textSFxxv}})$
for every $m\!\in\!\mathbb{N}$. Also, since $U$ is (assumed to be) complete, then by the assumption we have (ii) if $U\nvdash\sigma$, then $U\nvdash\Psi(^{\scriptsize\textbf{\textSFxxxix}}\sigma
^{\scriptsize\textbf{\textSFxxv}})$,
so  $U\vdash\neg\Psi(^{\scriptsize\textbf{\textSFxxxix}}\sigma
^{\scriptsize\textbf{\textSFxxv}})$,
thus  $U\vdash\neg\Theta(\overline{n},^{\scriptsize\textbf{\textSFxxxix}}\sigma
^{\scriptsize\textbf{\textSFxxv}})$
for every $n\!\in\!\mathbb{N}$. This contradicts $\textrm{\textsc{Rosser}}_T$.
\end{proof}

For the theorem to make sense, we should demonstrate a framework in which none of $\textrm{\textsc{G\"odel}}_T$, $\textrm{\textsc{Tarski}}_T$, $\textrm{\textsc{Carnap}}_T$, or
$\textrm{\textsc{Rosser}}_T$  holds. Let us fix a coding, which is due to {\sc Ackermann} (1937), cf.~\cite[Example~7]{CegRich99}.

\begin{definition}[\textsf{\textsc{Ackermann}} (1937) coding]
\label{def:coding}
\noindent

\noindent
 The number $0$ is not the code of anything; number $1$ is the code of the empty string; and every symbol is coded by an odd number, in particular $3$ is the code of $\neg$. Code the finite string $\langle a_1,\cdots,a_\ell\rangle$ by $\sum_{k=1}^{\ell}2^{\sum_{j=1}^{k}(a_j+1)}=
 \sum_{k=1}^{\ell}2^{(a_1+1)+\cdots+(a_k+1)}=
 2^{(a_1+1)}+2^{(a_1+1)+(a_2+1)}+\cdots+2^{(a_1+1)+\cdots+(a_\ell+1)}$.
\hfill\ding{71}\end{definition}

Let us note that this coding is computable and injective (cf.\ Definition~\ref{def:codelang}), but not surjective (for example, 4 is not the code of anything).

\begin{lemma}[the code of negation]\label{lem:code}
\noindent

\noindent
If 
$\eta\mapsto\llceil\eta\rrceil$
is {\sc Ackermann}'s coding in Definition~\ref{def:coding}, then for every string $\eta$
we have  $\llceil\neg\eta\rrceil\!=\!
16(1\!+\!\llceil\eta\rrceil)$.
\end{lemma}
\begin{proof}

\noindent
For $\eta=\langle a_1,\cdots,a_\ell\rangle$ we have
$\llceil\langle\neg,a_1,\cdots,a_\ell
\rangle\rrceil
=
2^{4}+ \sum_{k=1}^{\ell}2^{4+\sum_{j=1}^{k}(a_j+1)}=
16(1\!+\!\llceil\eta\rrceil)$.
\end{proof}

\begin{definition}[a new framework]
\label{def:model}
\noindent

\noindent
Define the function $\nu^{\boldsymbol\ast}\colon  \mathbb{N}\!\rightarrow\!\mathbb{N}$
by  $\nu^{\boldsymbol\ast}(n)\!=\!33\!+\!16n$ when $n$ is even, and
$\nu^{\boldsymbol\ast}(n)\!=\!16\!+\!16n$ when $n$ is odd.

\noindent
Let $\mathfrak{M}^{\boldsymbol\ast}$ be the structure $\langle\mathbb{N};\boldsymbol0,\boldsymbol1,
\boldsymbol<,\boldsymbol+,\boldsymbol\nu\rangle$ where $\boldsymbol\nu$ is interpreted as $\nu^{\boldsymbol\ast}$ above.

\noindent
Put $T^{\boldsymbol\ast}\!=\!{\rm Th}(\mathfrak{M}^{\boldsymbol\ast})$,
and let $\Psi^{\boldsymbol\ast}(x)
\!=\!\Upsilon^{\boldsymbol\ast}(x)
\!=\!\exists y (x\!=\!y\!\boldsymbol+\!y)$, $\Lambda^{\boldsymbol\ast}(x)
\!=\!\neg\Psi^{\boldsymbol\ast}(x)$,
and
$ \Theta^{\boldsymbol\ast}(x,y)
\!=\!\Psi^{\boldsymbol\ast}(y)\!\wedge\!(x\!=\!x)$.

\noindent
Denote {\sc Ackermann}'s coding
in Definition~\ref{def:coding} by $\eta\!\mapsto\!\llceil\eta\rrceil$.
Let $\eta\mapsto\llcorner\eta\lrcorner$ be a new coding on $\mathcal{L}^{\boldsymbol\ast}$ (see Definition~\ref{def:codelang}) defined as follows: $\llcorner\eta\lrcorner$ is $2\llceil\eta\rrceil$,
when $\eta$ is an $\mathfrak{M}^{\boldsymbol\ast}$-true $\mathcal{L}^{\boldsymbol\ast}$-sentence; and is $1\!+\!2\llceil\eta\rrceil$,
otherwise. Let $_{\scriptsize\textbf{\textSFxlix}}E\,\!_{\scriptsize\textbf{\textSFxxvii}}$ denote the closed term $\overline{\llcorner E\lrcorner}$.
\hfill\ding{71}\end{definition}

Since $\eta\!\mapsto\!
\llceil\eta\rrceil$
(in Definition~\ref{def:coding}) is injective, the new coding $\eta\mapsto\llcorner\eta\lrcorner$ (in Definition~\ref{def:model}) is injective too. For its computability, we note that   $\mathfrak{M}^{\boldsymbol\ast}=\langle\mathbb{N};\boldsymbol0,\boldsymbol1,
\boldsymbol<,\boldsymbol+,\boldsymbol\nu\rangle$
 is decidable since $\nu^{\boldsymbol\ast}$ is $\{\boldsymbol1,\boldsymbol+\}$-definable and  $\langle\mathbb{N};\boldsymbol0,\boldsymbol1,
\boldsymbol<,\boldsymbol+\rangle$ is decidable by {\sc Presburger}'s Theorem (cf.\ \cite[Theorem~3.3]{CegRich99}). Thus, $\eta\mapsto\llcorner\eta\lrcorner$ is computable as well (cf.\ Definition~\ref{def:codelang}).
We   show that $\nu^{\boldsymbol\ast}$ calculates the negation of sentences in the new coding's setting:

\begin{lemma}[$\nu^{\boldsymbol\ast}$ is the negation mapping of  sentences in the setting of  $\llcorner\cdot\lrcorner$]\label{lem:nu}
\noindent

\noindent
For every $\mathcal{L}^{\boldsymbol\ast}$-sentence $\sigma$, we have $\nu^{\boldsymbol\ast}(\llcorner\sigma\lrcorner)
=\llcorner\neg\sigma\lrcorner$.
\end{lemma}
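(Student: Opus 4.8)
The plan is a direct case analysis on whether $\sigma$ is $\mathfrak{M}^{\boldsymbol\ast}$-true, since the new coding $\llcorner\cdot\lrcorner$ was defined precisely so that this truth value is recorded in the parity of the code, and $\nu^{\boldsymbol\ast}$ branches on exactly that parity. The crucial preliminary observation I would record is the dictionary between parity and truth value: a true sentence $\sigma$ receives the even code $2\llceil\sigma\rrceil$, whereas a false one receives the odd code $1\!+\!2\llceil\sigma\rrceil$. Since $\sigma$ and $\neg\sigma$ have opposite truth values in $\mathfrak{M}^{\boldsymbol\ast}$, passing to the negation flips the parity, which is just the switch between the two clauses defining $\nu^{\boldsymbol\ast}$ and between the two clauses defining $\llcorner\cdot\lrcorner$.

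In the first case, suppose $\sigma$ is $\mathfrak{M}^{\boldsymbol\ast}$-true. Then $\llcorner\sigma\lrcorner=2\llceil\sigma\rrceil$ is even, so $\nu^{\boldsymbol\ast}(\llcorner\sigma\lrcorner)=33+16\cdot 2\llceil\sigma\rrceil=33+32\llceil\sigma\rrceil$. On the other side, $\neg\sigma$ is false, so $\llcorner\neg\sigma\lrcorner=1+2\llceil\neg\sigma\rrceil$; substituting the negation-code identity $\llceil\neg\sigma\rrceil=16(1+\llceil\sigma\rrceil)$ from Lemma~\ref{lem:code} yields $1+2\cdot 16(1+\llceil\sigma\rrceil)=33+32\llceil\sigma\rrceil$, which agrees.

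In the second case, suppose $\sigma$ is $\mathfrak{M}^{\boldsymbol\ast}$-false. Then $\llcorner\sigma\lrcorner=1+2\llceil\sigma\rrceil$ is odd, so $\nu^{\boldsymbol\ast}(\llcorner\sigma\lrcorner)=16+16(1+2\llceil\sigma\rrceil)=32(1+\llceil\sigma\rrceil)$. Now $\neg\sigma$ is true, so by Lemma~\ref{lem:code} we get $\llcorner\neg\sigma\lrcorner=2\llceil\neg\sigma\rrceil=2\cdot 16(1+\llceil\sigma\rrceil)=32(1+\llceil\sigma\rrceil)$, again matching. Since these two cases exhaust the possibilities, the identity $\nu^{\boldsymbol\ast}(\llcorner\sigma\lrcorner)=\llcorner\neg\sigma\lrcorner$ holds for every sentence $\sigma$.

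There is no genuine conceptual obstacle: the constants $33$ and $16$ in the definition of $\nu^{\boldsymbol\ast}$ were evidently reverse-engineered from Lemma~\ref{lem:code} so that both cases collapse to the same value. The only points demanding care are to pair the correct clause of $\nu^{\boldsymbol\ast}$ with the correct clause of $\llcorner\cdot\lrcorner$ — i.e.\ to keep the parity-versus-truth-value bookkeeping straight — and to apply the negation-code identity on the side of $\neg\sigma$ rather than $\sigma$. Everything else is routine arithmetic.
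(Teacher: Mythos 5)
Your proof is correct and follows essentially the same route as the paper's: a case split on whether $\mathfrak{M}^{\boldsymbol\ast}\vDash\sigma$, using Lemma~\ref{lem:code} to rewrite $\llceil\neg\sigma\rrceil$ and checking that the two clauses of $\nu^{\boldsymbol\ast}$ and of $\llcorner\cdot\lrcorner$ pair up to give the same value ($33+32\llceil\sigma\rrceil$ in the true case, $32(1+\llceil\sigma\rrceil)$ in the false case). The only cosmetic difference is that you evaluate both sides independently and compare, whereas the paper transforms $\nu^{\boldsymbol\ast}(\llcorner\sigma\lrcorner)$ into $\llcorner\neg\sigma\lrcorner$ by a single chain of equalities.
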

\begin{proof}

\noindent
The sentence $\sigma$ is either $\mathfrak{M}^{\boldsymbol\ast}$-true or $\mathfrak{M}^{\boldsymbol\ast}$-false. If $\mathfrak{M}^{\boldsymbol\ast}\vDash\sigma$,
then
$\llcorner\sigma\lrcorner=
2\llceil\sigma\rrceil$
and
$\neg\sigma$ is not  $\mathfrak{M}^{\boldsymbol\ast}$-true. So, $\llcorner\sigma\lrcorner$ is even, and by Lemma~\ref{lem:code} we have $\nu^{\boldsymbol\ast}(\llcorner\sigma\lrcorner)\!=\!33\!+\!16
\llcorner\sigma\lrcorner\!=\!33\!+\!32
\llceil\sigma\rrceil\!=\!1\!+\!2\!\cdot\!
16(1\!+\!\llceil\sigma\rrceil)\!=\!1\!+\!2
\llceil\neg\sigma\rrceil\!=\!\llcorner\!\neg
\sigma\!\lrcorner$.
If $\mathfrak{M}^{\boldsymbol\ast}\nvDash\sigma$, then
$\llcorner\sigma\lrcorner=1\!+\!2\llceil\sigma\rrceil$ and
$\neg\sigma$ is  $\mathfrak{M}^{\boldsymbol\ast}$-true. So, $\llcorner\sigma\lrcorner$ is odd, thus by Lemma~\ref{lem:code} we have $\nu^{\boldsymbol\ast}(\llcorner\sigma\lrcorner)\!=\!16\!+\!16
\llcorner\sigma\lrcorner\!=\!32\!+\!32
\llceil\sigma\rrceil\!=\!2\!\cdot\!16(1\!+\!
\llceil\sigma\rrceil)\!=\!2\llceil
\neg\sigma\rrceil\!=\!\llcorner\!\neg
\sigma\!\lrcorner$.
\end{proof}

Let us note that if the expression $\eta$ is not a sentence, then Lemma~\ref{lem:nu} does not hold; indeed, in that case  $\nu^{\boldsymbol\ast}(\llcorner\eta\lrcorner)\!+\!1
\!=\!\llcorner\!\neg\eta\lrcorner$.
Now, we have all the ingredients for constructing  a case in which none of the equivalent statements $\textrm{\textsc{G\"odel}}_T$, $\textrm{\textsc{Tarski}}_T$, $\textrm{\textsc{Carnap}}_T$,
$\textrm{\textsc{Rosser}}_T$ can hold (cf.\ \cite[Remark~2.6]{Salehi22}).

\begin{theorem}[$\neg\textrm{\textsc{G\"odel}}_{T^{\boldsymbol\ast}}$, $\neg\textrm{\textsc{Tarski}}_{T^{\boldsymbol\ast}}$,  $\neg\textrm{\textsc{Carnap}}_{T^{\boldsymbol\ast}}$,
$\neg\textrm{\textsc{Rosser}}_{T^{\boldsymbol\ast}}$]\label{thm:none}
\noindent

\noindent
The complete theory $T^{\boldsymbol\ast}$ contains $\textrm{\textit{Q}}^{\boldsymbol-}$, and neither $\textrm{\textsc{G\"odel}}_{T^{\boldsymbol\ast}}$, nor $\textrm{\textsc{Tarski}}_{T^{\boldsymbol\ast}}$, nor $\textrm{\textsc{Carnap}}_{T^{\boldsymbol\ast}}$, nor
$\textrm{\textsc{Rosser}}_{T^{\boldsymbol\ast}}$ holds for $U=T^{\boldsymbol\ast}$ and, respectively,  the formulas
$\Psi^{\boldsymbol\ast}(x)$,
$\Upsilon^{\boldsymbol\ast}(x)$,
$\Lambda^{\boldsymbol\ast}(x)$,
and
$\Theta^{\boldsymbol\ast}(x,y)$, in  Definition~\ref{def:model}.
\end{theorem}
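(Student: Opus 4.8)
The plan is to exhibit $\mathfrak{M}^{\boldsymbol\ast}$ itself as the witness that defeats all four statements, exploiting the fact that the coding $\llcorner\cdot\lrcorner$ was rigged so that the parity of a code records the truth value of the coded sentence. First I would check the containment $T^{\boldsymbol\ast}\supseteq\textit{Q}^{\boldsymbol-}$: axioms $({\sf A}_1)$ and $({\sf A}_2)$ are simply true statements about the linear order of $\langle\mathbb{N};\boldsymbol<\rangle$, so $\mathfrak{M}^{\boldsymbol\ast}\vDash({\sf A}_1),({\sf A}_2)$, while $({\sf A}_3)$ asserts exactly $\nu^{\boldsymbol\ast}(\llcorner\sigma\lrcorner)=\llcorner\neg\sigma\lrcorner$ for every sentence $\sigma$, which is Lemma~\ref{lem:nu}. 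Hence $\mathfrak{M}^{\boldsymbol\ast}\vDash\textit{Q}^{\boldsymbol-}$ and so $T^{\boldsymbol\ast}={\rm Th}(\mathfrak{M}^{\boldsymbol\ast})\supseteq\textit{Q}^{\boldsymbol-}$; being the theory of a structure, $T^{\boldsymbol\ast}$ is consistent and complete.

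The key bridge I would establish next is that $\Psi^{\boldsymbol\ast}(x)=\exists y\,(x\!=\!y\!\boldsymbol+\!y)$ defines the even numbers, and by Definition~\ref{def:model} a code $\llcorner\sigma\lrcorner$ is even precisely when $\mathfrak{M}^{\boldsymbol\ast}\vDash\sigma$. Therefore $\mathfrak{M}^{\boldsymbol\ast}\vDash\Psi^{\boldsymbol\ast}(\overline{\llcorner\sigma\lrcorner})\leftrightarrow\sigma$ for every sentence $\sigma$; in other words, inside $T^{\boldsymbol\ast}$ the formula ``$x$ is even'' behaves as a genuine truth-predicate. Since $T^{\boldsymbol\ast}$ is complete, $T^{\boldsymbol\ast}\vdash\sigma$ iff $\mathfrak{M}^{\boldsymbol\ast}\vDash\sigma$ iff $T^{\boldsymbol\ast}\vdash\Psi^{\boldsymbol\ast}(\overline{\llcorner\sigma\lrcorner})$, and dually $T^{\boldsymbol\ast}\nvdash\sigma$ forces $T^{\boldsymbol\ast}\vdash\neg\Psi^{\boldsymbol\ast}(\overline{\llcorner\sigma\lrcorner})$.

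With this bridge I would refute each of the four statements for $U=T^{\boldsymbol\ast}$. For $\textrm{\textsc{Tarski}}_{T^{\boldsymbol\ast}}$ with $\Upsilon^{\boldsymbol\ast}=\Psi^{\boldsymbol\ast}$, the displayed biconditionals all hold in $\mathfrak{M}^{\boldsymbol\ast}$, so $T^{\boldsymbol\ast}$ is consistent with $\{\Upsilon^{\boldsymbol\ast}(\overline{\llcorner\sigma\lrcorner})\leftrightarrow\sigma\}$ and the (universal) statement fails. For $\textrm{\textsc{G\"odel}}_{T^{\boldsymbol\ast}}$ with $\Psi^{\boldsymbol\ast}$, the hypothesis $T^{\boldsymbol\ast}\vdash\sigma$ iff $T^{\boldsymbol\ast}\vdash\Psi^{\boldsymbol\ast}(\overline{\llcorner\sigma\lrcorner})$ holds by the bridge, yet $U=T^{\boldsymbol\ast}$ is complete, contradicting the would-be conclusion. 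For $\textrm{\textsc{Carnap}}_{T^{\boldsymbol\ast}}$ with $\Lambda^{\boldsymbol\ast}=\neg\Psi^{\boldsymbol\ast}$, each disjunct $\Lambda^{\boldsymbol\ast}(\overline{\llcorner A_i\lrcorner})\leftrightarrow A_i$ is equivalent over $\mathfrak{M}^{\boldsymbol\ast}$ to $\neg A_i\leftrightarrow A_i$, hence false in $\mathfrak{M}^{\boldsymbol\ast}$, so no finite disjunction of them lies in $T^{\boldsymbol\ast}$. For $\textrm{\textsc{Rosser}}_{T^{\boldsymbol\ast}}$ with $\Theta^{\boldsymbol\ast}(x,y)=\Psi^{\boldsymbol\ast}(y)\wedge(x\!=\!x)$, the dummy variable $x$ makes conditions (i) and (ii) collapse to the two halves of the bridge above, so they are satisfied, while $U=T^{\boldsymbol\ast}$ is again complete. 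Alternatively, since $T^{\boldsymbol\ast}\supseteq\textit{Q}^{\boldsymbol-}$, once $\textrm{\textsc{Tarski}}_{T^{\boldsymbol\ast}}$ is refuted the other three fail automatically by Theorem~\ref{thm:main}; I would nonetheless present the concrete witnesses $\Psi^{\boldsymbol\ast},\Lambda^{\boldsymbol\ast},\Theta^{\boldsymbol\ast}$ as above to match the statement verbatim.

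The main obstacle is the verification of $({\sf A}_3)$, i.e.\ that $\nu^{\boldsymbol\ast}$ really computes negation under the twisted coding even though passing from $\sigma$ to $\neg\sigma$ flips the parity of the code; this is precisely the content of Lemma~\ref{lem:nu}, and once it is in hand every remaining step is a routine unwinding of the ``evenness $=$ truth'' identification.
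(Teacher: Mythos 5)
Your proposal is correct and follows essentially the same route as the paper's own proof: verify $\textit{Q}^{\boldsymbol-}\subseteq T^{\boldsymbol\ast}$ via Lemma~\ref{lem:nu}, establish the ``parity of $\llcorner\sigma\lrcorner$ $=$ truth of $\sigma$'' bridge for $\Psi^{\boldsymbol\ast}$, and then refute all four statements with the designated witnesses (including the correct observation that invoking Theorem~\ref{thm:main} alone would not pin down the specific formulas $\Psi^{\boldsymbol\ast},\Upsilon^{\boldsymbol\ast},\Lambda^{\boldsymbol\ast},\Theta^{\boldsymbol\ast}$ demanded by the statement). The only step the paper's proof spells out that you leave implicit is the re-verification that $\llcorner\cdot\lrcorner$ is a legitimate (computable, injective) coding --- via the decidability of $\mathfrak{M}^{\boldsymbol\ast}$ from {\sc Presburger}'s theorem and the $\{\boldsymbol1,\boldsymbol+\}$-definability of $\nu^{\boldsymbol\ast}$ --- which is the crucial reason a truth-revealing coding is admissible here at all.
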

\begin{proof}

\noindent
The complete theory $T^{\boldsymbol\ast}$ is decidable by {\sc Presburger}'s theorem and the $\{\boldsymbol1,\boldsymbol+\}$-definability of $\nu^{\boldsymbol\ast}$. Thus, the mapping $\eta\mapsto\llcorner\eta\lrcorner$ is a (computable injective) coding. Trivially, the axioms $({\sf A}_1,{\sf A}_2, \textrm{Definition}~\ref{def:theory})$
hold in $\mathfrak{M}^{\boldsymbol\ast}$, and  Lemma~\ref{lem:nu} implies that  $({\sf A}_3, \textrm{Definition}~\ref{def:theory})$
holds too. Thus, $T^{\boldsymbol\ast}$ contains $\textrm{\textit{Q}}^{\boldsymbol-}$. We now show that none of
$\textrm{\textsc{G\"odel}}_{T^{\boldsymbol\ast}}$, $\textrm{\textsc{Tarski}}_{T^{\boldsymbol\ast}}$,  $\textrm{\textsc{Carnap}}_{T^{\boldsymbol\ast}}$, or
$\textrm{\textsc{Rosser}}_{T^{\boldsymbol\ast}}$ holds for
$\Psi^{\boldsymbol\ast}(x)$,
$\Upsilon^{\boldsymbol\ast}(x)$,
$\Lambda^{\boldsymbol\ast}(x)$,
and
$\Theta^{\boldsymbol\ast}(x,y)$.

$\neg\textrm{\textsc{G\"odel}}_{T^{\boldsymbol\ast}}$:
For every sentence $\sigma$, $T^{\boldsymbol\ast}\vdash\sigma$ iff $\mathfrak{M}^{\boldsymbol\ast}\vDash\sigma$ iff $\llcorner\sigma\lrcorner$ is even iff $\mathfrak{M}^{\boldsymbol\ast}\vDash
\Psi^{\boldsymbol\ast}(_{\scriptsize\textbf{\textSFxlix}}\sigma_{\scriptsize\textbf{\textSFxxvii}})$
iff $T^{\boldsymbol\ast}\vdash\Psi^{\boldsymbol\ast}(_{\scriptsize\textbf{\textSFxlix}}\sigma_{\scriptsize\textbf{\textSFxxvii}})$.

$\neg\textrm{\textsc{Tarski}}_{T^{\boldsymbol\ast}}$: For every sentence $\sigma$ we have $T^{\boldsymbol\ast}\vdash\Upsilon^{\boldsymbol\ast}(_{\scriptsize\textbf{\textSFxlix}}\sigma_{\scriptsize\textbf{\textSFxxvii}})\!\leftrightarrow\!\sigma$ by $\neg\textrm{\textsc{G\"odel}}_{T^{\boldsymbol\ast}}$.

$\neg\textrm{\textsc{Carnap}}_{T^{\boldsymbol\ast}}$:
For any finitely many sentences $\{A_i\}_i$ we have $T^{\boldsymbol\ast}\vdash
\bigwedge\!\!\!\!\!\bigwedge_i
      \big(\neg\Lambda^{\boldsymbol\ast}(_{\scriptsize\textbf{\textSFxlix}}A_i\,\!
_{\scriptsize\textbf{\textSFxxvii}})
      \leftrightarrow A_i\big)$ by $\neg\textrm{\textsc{Tarski}}_{T^{\boldsymbol\ast}}$;       so $T^{\boldsymbol\ast}\vdash\neg
\bigvee\!\!\!\!\!\bigvee_i
      \big(\Lambda^{\boldsymbol\ast}(_{\scriptsize\textbf{\textSFxlix}}A_i\,\!
_{\scriptsize\textbf{\textSFxxvii}})
      \leftrightarrow A_i\big)$,
      thus $T^{\boldsymbol\ast}\nvdash
\bigvee\!\!\!\!\!\bigvee_i
      \big(\Lambda^{\boldsymbol\ast}(_{\scriptsize\textbf{\textSFxlix}}A_i\,\!
_{\scriptsize\textbf{\textSFxxvii}})
      \leftrightarrow A_i\big)$
      by the consistency of $T^{\boldsymbol\ast}$.

$\neg\textrm{\textsc{Rosser}}_{T^{\boldsymbol\ast}}$:
We saw that (i) if $T^{\boldsymbol\ast}\vdash\sigma$, then $T^{\boldsymbol\ast}\vdash\Psi^{\boldsymbol\ast}(_{\scriptsize\textbf{\textSFxlix}}\sigma_{\scriptsize\textbf{\textSFxxvii}})$, so $T^{\boldsymbol\ast}\vdash\Theta^{\boldsymbol\ast}(\overline{m},_{\scriptsize\textbf{\textSFxlix}}\!\sigma_{\scriptsize\textbf{\textSFxxvii}})$ for every $m\!\in\!\mathbb{N}$. Also note that (ii) if $T^{\boldsymbol\ast}\nvdash\sigma$, then
$\llcorner\sigma\lrcorner$ is odd, so  $T^{\boldsymbol\ast}\vdash\neg\Psi^{\boldsymbol\ast}(_{\scriptsize\textbf{\textSFxlix}}\sigma_{\scriptsize\textbf{\textSFxxvii}})$,
 thus $T^{\boldsymbol\ast}\vdash\neg\Theta^{\boldsymbol\ast}(\overline{n},_{\scriptsize\textbf{\textSFxlix}}\!\sigma_{\scriptsize\textbf{\textSFxxvii}})$ for every $n\!\in\!\mathbb{N}$.
\end{proof}

\section{\textsf{\textsc{Tarski}}'s Theorem and the Weak Syntactic Diagonal Lemma, \'a la \textsf{\textsc{Chaitin}}}\label{sec:new}
{\sc Chaitin}'s proof for the first incompleteness theorem appeared in   \cite{Chaitin71}. There are several versions of it now; one was presented  in \cite[Theorem~3.3]{SalSer18}. The proof was adapted for  {\sc Rosser}'s theorem   in \cite[Theorem~3.9]{SalSer18}. Here, we prove {\sc Tarski}'s undefinability theorem and the (weak syntactic) diagonal lemma of {\sc Carnap} by the same method.

\begin{proposition}[\textsf{\textsc{G\"odel-Tarski}'s Truth-Undefinability Theorem}]\label{prop:tarski}
\noindent

\noindent
For every formula $\Upsilon(x)$,  the theory $\textit{Q}$ is inconsistent with  the set $\{\Upsilon(^{\scriptsize\textbf{\textSFxxxix}}\sigma^{\scriptsize\textbf{\textSFxxv}})\!\leftrightarrow\!\sigma\mid\sigma\textrm{ is a sentence}\}$.
\end{proposition}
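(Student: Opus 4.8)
The plan is to give a Chaitin-style argument built on Berry's paradox, with the hypothetical truth predicate $\Upsilon(x)$ playing the role that a provability predicate plays in the incompleteness version, and with the \emph{length} of formulas as the complexity measure. Arguing towards a contradiction, I would assume that $\mathfrak{M}\models\textit{Q}+\{\Upsilon(^{\scriptsize\textbf{\textSFxxxix}}\sigma^{\scriptsize\textbf{\textSFxxv}})\leftrightarrow\sigma\mid\sigma\textrm{ a sentence}\}$ (so that inconsistency follows by completeness once a contradiction is reached), whence inside $\mathfrak{M}$ the predicate $\Upsilon$ correctly decides every standard sentence. Using the representability in $\textit{Q}$ of the substitution function $\mathrm{sub}(e,x)=\ulcorner\varphi_e(\overline{x})\urcorner$ (where $\varphi_e$ is the one-variable formula coded by $e$) and of the length function $\mathrm{len}(e)$, I would introduce the naming relation $\mathrm{Names}(e,x)\,{:}\!\equiv\,\Upsilon(\mathrm{sub}(e,x))\wedge\forall u{<}x\,\neg\Upsilon(\mathrm{sub}(e,u))$, read as \emph{``$x$ is the least number of which the formula coded by $e$ is true.''} The point of the ``least'' clause is that each $e$ then names \emph{at most one} number, which is the combinatorial engine of the whole argument.

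The heart is an incompressibility count. Over the finite alphabet there are at most $A^{n+1}$ strings of length $\le n$, so at most that many numbers are named by a formula of length $\le n$; hence some number below $A^{n+1}$ is named by no such formula, and I would take $b_n$ to be the least such. Abbreviating $\mathrm{Hard}_n(x)\,{:}\!\equiv\,\forall e\,[\mathrm{len}(e)\le t_n\to\neg\mathrm{Names}(e,x)]$, the Berry formula is
\[
\beta_n(x)\ {:}\!\equiv\ \mathrm{Hard}_n(x)\wedge\forall x'{<}x\,\neg\mathrm{Hard}_n(x'),
\]
which says ``$x$ is the least number named by no formula of length $\le n$'' and so names $b_n$. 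The decisive move is to write the threshold not as the unary numeral $\overline{n}$ but as a short term $t_n$ built from $\boldsymbol0,\boldsymbol1,\boldsymbol+,\boldsymbol\times$: every $n$ admits such a term of length $O(\log n)$, so $\beta_n$ has length $L+O(\log n)$ for a fixed constant $L$. Choosing $n$ large enough that $L+O(\log n)\le n$, the formula $\beta_n$ is itself a length-$\le n$ formula naming $b_n$, contradicting the choice of $b_n$ as named by no such formula. The naming facts are finally cashed out against $\Upsilon$ through the $\textit{Q}$-provable identities $\mathrm{sub}(\ulcorner\beta_n\urcorner,\overline{a})=\ulcorner\beta_n(\overline{a})\urcorner$ and a single instance of the given biconditional $\Upsilon(^{\scriptsize\textbf{\textSFxxxix}}\beta_n(\overline{b_n})^{\scriptsize\textbf{\textSFxxv}})\leftrightarrow\beta_n(\overline{b_n})$.

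The hard part will be the bookkeeping that turns this picture into a genuine derivation over the \emph{weak} theory $\textit{Q}$. I must guarantee that every instance of $\Upsilon$ the argument touches is applied to (the numeral of) a \emph{standard} sentence, so that the finitely many instances of the $T$-schema I invoke actually control it. This forces me to confine the quantifier in $\mathrm{Hard}_n$ to standard codes — using that $\mathrm{len}(e)\le\overline{n}$ provably bounds $e$ below a fixed standard number, and that $\textit{Q}$ collapses such a bounded quantifier to a finite disjunction of numeral instances exactly as axioms $({\sf A}_1,{\sf A}_2)$ do — and likewise to confine the clause $\forall u{<}x$ once $x=\overline{b_n}$ is a standard numeral lying below the standard bound $A^{n+1}$ (everything below a standard number is standard). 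The second delicate point is the length estimate itself: I must check that the fixed template $\mathrm{Hard}$ together with an efficient $\boldsymbol\times$-term for the threshold keeps $|\beta_n|$ strictly below $n$, while the counting bound still certifies that $b_n$ — which may be astronomically larger than $n$ — is genuinely incompressible. Balancing ``cheap to name the threshold $n$'' against ``expensive to name the witness $b_n$'' is the crux that makes Berry's paradox bite, and it is where all the real work lies.
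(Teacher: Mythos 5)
Your proposal is a Berry-paradox variant of the Chaitin-style argument, but it has a genuine gap, and it sits exactly at the step you describe as ``bookkeeping.'' The load-bearing claim is that $\mathrm{len}(e)\le t_n$ ``provably bounds $e$ below a fixed standard number,'' so that the quantifier $\forall e$ in $\mathrm{Hard}_n$ collapses to a finite disjunction over standard codes. Over $\textit{Q}$ this is false. Representability in $\textit{Q}$ controls a represented function only at \emph{standard} arguments: whatever formula $\mathrm{Len}(e,y)$ you choose, a model $\mathfrak{M}$ of $\textit{Q}$ plus the $\Upsilon$-biconditionals may contain nonstandard $e$ with $\mathfrak{M}\models\mathrm{Len}(e)\le t_n$, and for such $e$ the value $\mathrm{sub}(e,\overline{b_n})$ and the truth of $\Upsilon$ at it are completely unconstrained, since the biconditionals govern $\Upsilon$ only at numerals of standard sentences. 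Hence $\mathfrak{M}\models\mathrm{Hard}_n(\overline{b_n})$ can simply fail, $\beta_n$ need not name $b_n$, and no contradiction ensues. The collapse facts you appeal to (the $({\sf A}_1)$, $({\sf A}_2)$-style lemmas, which $\textit{Q}$ does prove) apply only to quantifiers bounded by $\boldsymbol<$ a standard \emph{numeral}, not to quantifiers bounded through a represented function like $\mathrm{len}$. Nor can you repair this by inserting an explicit bound $e\boldsymbol< t$ into $\mathrm{Hard}_n$: any closed term over $\{\boldsymbol0,\boldsymbol1,\boldsymbol+,\boldsymbol\times\}$ denoting a number $\ge A^{n+1}\ge 2^{n}$ must have length at least $2n+1$ (an easy induction shows a term of length $\ell$ denotes a value at most $2^{(\ell-1)/2}$), so $|\beta_n|$ would exceed $n$ and the incompressibility of $b_n$ no longer bites. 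This is not bookkeeping; it is precisely the point where length-based Berry arguments need a theory that can reason about coding and exponentiation universally (they work over $I\Delta_0+\mathrm{exp}$, or over $\mathbb{N}$ as in Boolos's proof), and where they fail over $\textit{Q}$.

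The paper's proof is also Chaitin-style but dodges exactly this trap by choosing a different complexity measure: machine \emph{indices} rather than formula \emph{length}. There $\mathscr{K}^\Upsilon(x)>y$ means ``no $\Upsilon$-computable function with index $i\le y$ outputs $x$ on input $0$,'' so the resource-bounded quantifier is $i\le\overline{\mathfrak{c}}$ --- a numeral $<$-bound, which $\textit{Q}$ provably collapses to the standard indices $0,\dots,\mathfrak{c}$ --- and the self-reference is supplied by Kleene's Recursion Theorem instead of by your balancing of short threshold terms against formula length. If you want to salvage your route, either switch the measure to indices (which essentially reproduces the paper's proof) or strengthen the base theory beyond $\textit{Q}$; as written, the argument has a hole that the stated framework cannot close.
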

\begin{proof}

\noindent
Assume not; fix a model $\mathfrak{M}$ of
$\textit{Q}+\{\Upsilon(^{\scriptsize\textbf{\textSFxxxix}}\sigma^{\scriptsize\textbf{\textSFxxv}})\!\leftrightarrow\!\sigma\mid\sigma\textrm{ is a sentence}\}$.
Suppose that $\boldsymbol\varphi_0^\Upsilon,
\boldsymbol\varphi_1^\Upsilon,
\boldsymbol\varphi_2^\Upsilon,\cdots$
is an effective enumeration of all the $\Upsilon$-computable (i.e., computable with oracle $\Upsilon$) unary functions. Define the $\Upsilon$-{\sc Kolmogorov}-{\sc Chaitin} complexity of   $n\!\in\!\mathbb{N}$ to be $\mathscr{K}^\Upsilon\!(n)\!=\!\min\{i\mid
\boldsymbol\varphi_i^\Upsilon\!(0)\!=\!n\}$,
the minimum index of the $\Upsilon$-computable function that outputs $n$ on input $0$. By {\sc Kleene}'s Recursion Theorem \cite{Kleene38} there exists some $\textswab{c}\!\in\!\mathbb{N}$ such that $\boldsymbol\varphi_{\textswab{c}}^\Upsilon\!(x)\!=\!\min z\!\colon\!\Upsilon(\ulcorner\!\langle\!\!\langle
\mathscr{K}^\Upsilon\!(\overline{z})\!>\!
\overline{\textswab{c}}
\rangle\!\!\rangle\!\urcorner)$,
where
$\langle\!\!\langle
\mathscr{K}^\Upsilon(x)\!>\!y
\rangle\!\!\rangle$
is the arithmetical formula which says that ``the $\Upsilon$-{\sc Kolmogorov}-{\sc Chaitin} complexity of $x$ is greater than $y$''.\footnote{Notice that this very proof implies that the unary function $z\!\mapsto\!\mathscr{K}^\Upsilon\!(z)$ is not $\Upsilon$-computable; though, for every constant $c\!\in\!\mathbb{N}$, the function $x\!\mapsto\!\min z\!\colon\!\Upsilon(\ulcorner\!\langle\!\!\langle
\mathscr{K}^\Upsilon\!(\overline{z})\!>\!
\overline{c}
\rangle\!\!\rangle\!\urcorner)$
is clearly $\Upsilon$-computable.}
By the Pigeonhole Principle (a version of which is provable in $\textit{Q}$, see \cite[Lemma~3.8]{SalSer18}) there exists some element $u\!\leqslant\!\textswab{c}\!+\!1$ in $\mathfrak{M}$ such that $\mathfrak{M}\vDash\langle\!\!\langle
\mathscr{K}^\Upsilon\!(\overline{u})\!>\!\overline{\textswab{c}}
\rangle\!\!\rangle$;
note that    $\{\boldsymbol\varphi_0^\Upsilon(0),
\boldsymbol\varphi_1^\Upsilon(0),
\cdots,
\boldsymbol\varphi_{\textswab{c}}^\Upsilon(0)\}$ has  at most $\textswab{c}\!+\!1$ members, and  $\{0,1,\cdots,\textswab{c}\!+\!1\}$ has $\textswab{c}\!+\!2$ members. For the least $\textswab{u}\!\in\!\mathfrak{M}$ with $\mathfrak{M}\vDash\langle\!\!\langle
\mathscr{K}^\Upsilon\!(\overline{\textswab{u}})\!>\!\overline{\textswab{c}}
\rangle\!\!\rangle$
we have
$\mathfrak{M}\vDash
\Upsilon(\ulcorner\!\langle\!\!\langle
\mathscr{K}^\Upsilon\!(\overline{\textswab{u}})\!>\!
\overline{\textswab{c}}
\rangle\!\!\rangle\!\urcorner)$
and $\mathfrak{M}\vDash
\forall z\!<\!\overline{\textswab{u}}\,\neg
\Upsilon(\ulcorner\!\langle\!\!\langle
\mathscr{K}^\Upsilon\!(\overline{z})\!>\!
\overline{\textswab{c}}
\rangle\!\!\rangle\!\urcorner)$;
  so $\mathfrak{M}\vDash\langle\!\!\langle
\boldsymbol\varphi_{\textswab{c}}^\Upsilon\!(\overline{0})\!=\!\overline{\textswab{u}}
\rangle\!\!\rangle$,
thus
$\mathfrak{M}\vDash\langle\!\!\langle
\mathscr{K}^\Upsilon\!(\overline{\textswab{u}})\!\leqslant\!\overline{\textswab{c}}
\rangle\!\!\rangle$, a contradiction.
\end{proof}

The part $\boldsymbol(2\Longrightarrow 3\boldsymbol)$ in the proof of Theorem~\ref{thm:main} immediately yields the weak syntactic diagonal lemma from the above proof. However, this {\sc Chaitin} style argument can   prove the weak  lemma directly.

\begin{proposition}[weak syntactic \textsf{\textsc{G\"odel-Carnap}}'s Diagonal Lemma]\label{prop:wdl}
\noindent

\noindent
For every formula $\Lambda(x)$  there are finitely many sentences $\{A_i\}_i$ such that   $\textit{Q}\vdash\bigvee\!\!\!\!\!\bigvee_i
      \big(\Lambda(^{\scriptsize\textbf{\textSFxxxix}}A_i\,\!
^{\scriptsize\textbf{\textSFxxv}})\!\leftrightarrow\!A_i\big)$.
\end{proposition}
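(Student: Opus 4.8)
The plan is to run the argument of Proposition~\ref{prop:tarski} almost verbatim, but to read off from it the \emph{finite} list of sentences that are actually consumed, thereby converting its global inconsistency into the desired finite disjunction. First I would set $\Upsilon(x)=\neg\Lambda(x)$ and note, via the propositional equivalence $\neg\bigwedge\!\!\!\!\!\bigwedge_i(\neg p_i\leftrightarrow q_i)\equiv\bigvee\!\!\!\!\!\bigvee_i(p_i\leftrightarrow q_i)$ already exploited in part $\boldsymbol(\!\!(2\Longrightarrow3)\!\!\boldsymbol)$ of Theorem~\ref{thm:main}, that the goal $\textit{Q}\vdash\bigvee\!\!\!\!\!\bigvee_i(\Lambda(\ulcorner A_i\urcorner)\leftrightarrow A_i)$ is \emph{equivalent} to the inconsistency of $\textit{Q}$ with the \emph{finite} set $\{\Upsilon(\ulcorner A_i\urcorner)\leftrightarrow A_i\}_i$. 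So it suffices to exhibit finitely many sentences $A_i$ for which $\textit{Q}+\{\Upsilon(\ulcorner A_i\urcorner)\leftrightarrow A_i\}_i$ has no model.

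To build these sentences I would set up the $\Upsilon$-{\sc Kolmogorov}-{\sc Chaitin} complexity $\mathscr{K}^\Upsilon$ exactly as in Proposition~\ref{prop:tarski}, and apply {\sc Kleene}'s Recursion Theorem to obtain a (standard) index $\textswab{c}$ with $\boldsymbol\varphi_{\textswab{c}}^\Upsilon(x)=\min z\colon\Upsilon(\ulcorner\langle\!\!\langle\mathscr{K}^\Upsilon(\overline{z})>\overline{\textswab{c}}\rangle\!\!\rangle\urcorner)$. The finitely many sentences are then $A_i=\langle\!\!\langle\mathscr{K}^\Upsilon(\overline{i})>\overline{\textswab{c}}\rangle\!\!\rangle$ for $i=0,1,\dots,\textswab{c}+1$. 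The decisive observation, to be checked rather than assumed, is that the contradiction in Proposition~\ref{prop:tarski} only ever invokes the biconditional $\Upsilon(\ulcorner\sigma\urcorner)\leftrightarrow\sigma$ for $\sigma$ among these $A_i$, because the witness $\textswab{u}$ produced by the Pigeonhole Principle satisfies $\textswab{u}\leqslant\textswab{c}+1$; thus no infinite family of hypothetical truth-predicate instances is needed.

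I would then argue inside an arbitrary model $\mathfrak{M}\vDash\textit{Q}+\{\Upsilon(\ulcorner A_i\urcorner)\leftrightarrow A_i\}_{i\leqslant\textswab{c}+1}$. The $\textit{Q}$-provable Pigeonhole Principle (\cite[Lemma~3.8]{SalSer18}) yields some $u\leqslant\textswab{c}+1$ with $\mathfrak{M}\vDash A_u$, since the $\textswab{c}+1$ values $\boldsymbol\varphi_0^\Upsilon(0),\dots,\boldsymbol\varphi_{\textswab{c}}^\Upsilon(0)$ cannot cover all of $\{0,\dots,\textswab{c}+1\}$. Choosing the least such $\textswab{u}$ and feeding in the listed biconditionals for $z\leqslant\textswab{u}$ gives both $\mathfrak{M}\vDash\Upsilon(\ulcorner A_{\textswab{u}}\urcorner)$ and $\mathfrak{M}\vDash\forall z<\overline{\textswab{u}}\,\neg\Upsilon(\ulcorner\langle\!\!\langle\mathscr{K}^\Upsilon(\overline{z})>\overline{\textswab{c}}\rangle\!\!\rangle\urcorner)$, whence $\mathfrak{M}\vDash\langle\!\!\langle\boldsymbol\varphi_{\textswab{c}}^\Upsilon(\overline{0})=\overline{\textswab{u}}\rangle\!\!\rangle$ and so $\mathfrak{M}\vDash\langle\!\!\langle\mathscr{K}^\Upsilon(\overline{\textswab{u}})\leqslant\overline{\textswab{c}}\rangle\!\!\rangle$, contradicting $\mathfrak{M}\vDash A_{\textswab{u}}$. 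This shows the finite set is inconsistent with $\textit{Q}$, which by the first paragraph is precisely the claim.

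The main obstacle I anticipate is bookkeeping rather than conceptual: one must verify that the selection of the least witness $\textswab{u}$ and the evaluation of the bounded statements $\langle\!\!\langle\mathscr{K}^\Upsilon(\overline{z})\leqslant\overline{\textswab{c}}\rangle\!\!\rangle$ go through in a possibly nonstandard $\mathfrak{M}$ while drawing only on the finitely many assumed biconditionals. This should be smooth because $\textswab{c}$, and hence the range $0,\dots,\textswab{c}+1$ of candidate indices, is a genuine standard natural number supplied by the Recursion Theorem in the metatheory; consequently the least $\textswab{u}$ is chosen from a standard finite set (so no least-number principle in $\textit{Q}$ is required), the universal bound $\forall z<\overline{\textswab{u}}$ unfolds into finitely many standard instances below a standard numeral, and the only biconditionals ever consumed are those for $A_0,\dots,A_{\textswab{c}+1}$ — which is exactly what keeps the diagonal disjunction finite.
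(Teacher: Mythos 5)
Your proposal is correct and follows essentially the same route as the paper's own proof: the paper also sets $\Upsilon(x)=\neg\Lambda(x)$, takes $A_i=\langle\!\!\langle \mathscr{K}^\Upsilon\!(\overline{i})\!>\!\overline{\textswab{c}}\rangle\!\!\rangle$ for $i\leqslant\textswab{c}+1$, and, assuming the disjunction unprovable, reruns the model-theoretic contradiction of Proposition~\ref{prop:tarski} inside a model of $\textit{Q}$ together with these finitely many biconditionals. The bookkeeping you make explicit (that the pigeonhole witness is a standard number $\leqslant\textswab{c}+1$, so only those finitely many biconditionals are ever consumed) is precisely what the paper leaves implicit in its instruction to ``continue the proof of Proposition~\ref{prop:tarski} after footnote~2.''
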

\begin{proof}

\noindent
Put $\Upsilon(x)\!=\!\neg\Lambda(x)$; with the notation of the proof of Proposition~\ref{prop:tarski},  let $A_i=\langle\!\!\langle
\mathscr{K}^\Upsilon\!(\overline{i})\!>\!\overline{\textswab{c}}
\rangle\!\!\rangle$
 for $i\!\leqslant\!\textswab{c}\!+\!1$. If $\textit{Q}\nvdash\bigvee\!\!\!\!\!\bigvee_{i\leqslant\textswab{c}+1}
      \big(\Lambda(^{\scriptsize\textbf{\textSFxxxix}}A_i\,\!
^{\scriptsize\textbf{\textSFxxv}})\!\leftrightarrow\!A_i\big)$,
then $\textit{Q}+\bigwedge\!\!\!\!\!\bigwedge_{i\leqslant\textswab{c}+1}
      \big(\Upsilon(^{\scriptsize\textbf{\textSFxxxix}}A_i\,\!
^{\scriptsize\textbf{\textSFxxv}})\!\leftrightarrow\!A_i\big)$ is consistent, and so  has a model, say, $\mathfrak{M}$.   Now, continue the proof of Proposition~\ref{prop:tarski} (after the footnote {\footnotesize{\bf 2}}) for reaching to    a contradiction.
\end{proof}

\section{Appendix: Constructivity of an Alternative Proof of the Diagonal Lemma}\label{sec:app}
For $m,n\!\in\!\mathbb{N}$, let $\boldsymbol\delta({m},{n})$ say that the formula with    code $m$ has exactly one free variable and defines the number $n$; that is, if $\varphi(x)$ is the formula with code $m$ that has exactly one free variable $x$,   then the statement  $\forall x [\varphi(x)\leftrightarrow x\!=\!\overline{n}]$
holds. It was proved in \cite[Theorem~2.3]{Salehi20} that for every   formula $\Lambda(x)$ there are some  $m,n\!\in\!\mathbb{N}$ such that $\mathbb{N}\vDash
\Lambda\big(\ulcorner\boldsymbol\delta(\overline{m},
\overline{n})\urcorner\big)\!\leftrightarrow\!\boldsymbol\delta(\overline{m},\overline{n}).$
The proof  was not constructive, it only showed the mere existence of some $m,n\!\in\!\mathbb{N}$ with the above property; it did not determine  which $m,n$. 
By  a suggestion of a referee of \cite{Salehi20}, for a restricted class of $\Lambda$ formulas, such $m,n$ can be found constructively; but it was left open if there exists a constructive way of finding  such $m,n$ for every formula $\Lambda(x)$. Here, we show that there is such a way, but with a very different method. Actually, the following proof is more similar to the classical one (rather than to the proof of Theorem~2.3 in \cite{Salehi20}).

\begin{theorem}[strong syntactic \textsf{\textsc{G\"odel-Carnap}'s Diagonal Lemma}]\label{prop:sdl}
\noindent

\noindent
For every given formula $\Lambda(x)$ one can effectively find    $m,n\!\in\!\mathbb{N}$ such
 that $\textit{Q}\vdash
\boldsymbol\delta(\overline{m},\overline{n})
\!\leftrightarrow\!
\Lambda\big(\,\!^{\scriptsize\textbf{\textSFxxxix}}
\boldsymbol\delta(\overline{m},
\overline{n})\,\!^{\scriptsize\textbf{\textSFxxv}}\big).$
\end{theorem}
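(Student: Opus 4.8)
The plan is to run the classical \textsc{G\"odel--Carnap} diagonal construction, but with the self-referential sentence packaged as an instance of $\boldsymbol\delta$, so that the resulting fixed point is literally of the required shape $\boldsymbol\delta(\overline m,\overline n)$; effectiveness will then be automatic, because every ingredient is an explicitly given primitive recursive function and no appeal to a mere existence argument (as in \cite{Salehi20}) is made.

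First I would fix two distinct targets, say $0$ and $1$, and aim to build a one-variable formula $\varphi(x)$, with code $m=\ulcorner\varphi\urcorner$, that provably (in $\textit{Q}$) defines $0$ when $\Lambda$ holds at $c=\ulcorner\boldsymbol\delta(\overline m,\overline0)\urcorner$ and defines $1$ otherwise; concretely, I want
$$\textit{Q}\vdash \varphi(x)\leftrightarrow\big[(x\!=\!\overline0\wedge\Lambda(\overline c))\vee(x\!=\!\overline1\wedge\neg\Lambda(\overline c))\big].$$
Once this is in hand, a short case split settles everything: if $\Lambda(\overline c)$ then $\textit{Q}\vdash\forall x[\varphi(x)\leftrightarrow x\!=\!\overline0]$, i.e.\ $\textit{Q}\vdash\boldsymbol\delta(\overline m,\overline0)$; while if $\neg\Lambda(\overline c)$ then $\textit{Q}\vdash\forall x[\varphi(x)\leftrightarrow x\!=\!\overline1]$, whence $\textit{Q}\vdash\neg\boldsymbol\delta(\overline m,\overline0)$ using $\textit{Q}\vdash\overline0\!\neq\!\overline1$. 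Together these give $\textit{Q}\vdash\boldsymbol\delta(\overline m,\overline0)\leftrightarrow\Lambda(\overline c)$, which is the desired equivalence with $n=0$.

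The only genuine work is breaking the circularity, since $\varphi$ must mention the code $c$ of a sentence built out of $\varphi$ itself. I would do this with the standard substitution device. Let $\mathrm{sub}$ be the primitive recursive function that, on the code $p=\ulcorner\psi(v,x)\urcorner$ of a two-variable formula $\psi$, returns the code $\ulcorner\psi(\overline p,x)\urcorner$ of its diagonal substitution instance, and let $e$ be the primitive recursive function sending $v$ to $\ulcorner\boldsymbol\delta(\overline{\mathrm{sub}(v)},\overline0)\urcorner$ (compute $\mathrm{sub}(v)$, decode it to the one-variable formula $\chi$, and return the code of $\forall x[\chi(x)\leftrightarrow x\!=\!\overline0]$). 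Choosing a formula $\mathsf E(v,w)$ that strongly represents $e$ in $\textit{Q}$, I set
$$\Theta(v,x)\;\equiv\;\exists w\big[\mathsf E(v,w)\wedge\big((x\!=\!\overline0\wedge\Lambda(w))\vee(x\!=\!\overline1\wedge\neg\Lambda(w))\big)\big],$$
and finally take $v_0=\ulcorner\Theta(v,x)\urcorner$, $m=\mathrm{sub}(v_0)$, $\varphi(x)=\Theta(\overline{v_0},x)$, and $n=0$. Then $c=\ulcorner\boldsymbol\delta(\overline m,\overline0)\urcorner=e(v_0)$ by construction, and the functional representability $\textit{Q}\vdash\mathsf E(\overline{v_0},w)\leftrightarrow w\!=\!\overline c$ collapses the existential quantifier in $\Theta(\overline{v_0},x)$ to exactly the boxed equivalence above.

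I expect the main difficulty to be bookkeeping rather than conceptual. Two points need care: (i) one must verify that $e$ is a genuine fixed recursive function that does not secretly mention $\Theta$ --- and it does not, since it invokes only the generic $\mathrm{sub}$ and the fixed coding --- so that $\Theta$ is well defined; and (ii) one must confirm that strong (functional) representability in $\textit{Q}$ really licenses replacing $\exists w[\mathsf E(\overline{v_0},w)\wedge P(w)]$ by $P(\overline c)$. Both are routine facts about $\textit{Q}$. Effectiveness is then immediate, as $m$ and $n$ are produced by explicit evaluation of $\mathrm{sub}$ on the concretely written formula $\Theta$, which is precisely the constructivity the theorem asks for.
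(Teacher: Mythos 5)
Your construction is correct, and it is genuinely different from the paper's, though both belong to the same classical family: an explicit, effective diagonalization via strong representability in $\textit{Q}$, arranged so that the fixed point is literally a $\boldsymbol\delta$-sentence. The paper uses only the off-the-shelf diagonal function $a\mapsto\ulcorner\alpha(\overline{a})\urcorner$, strongly represented by a formula $\boldsymbol\sigma(x,y)$, and forces the G\"odel fixed point itself into $\boldsymbol\delta$-shape by padding: it sets $\zeta(x,y)=\big([\boldsymbol\sigma(y,x)\rightarrow\Lambda(x)]\leftrightarrow(x\!=\!y)\big)$ and $\tau(y)=\forall x[\zeta(x,y)\leftrightarrow x\!=\!y]$, takes $n=\ulcorner\tau\urcorner$, $\kappa(x)=\zeta(x,\overline{n})$, $m=\ulcorner\kappa\urcorner$; then $\boldsymbol\delta(\overline{m},\overline{n})$ is literally the diagonal $\tau(\overline{n})$, and a chain of $\textit{Q}$-equivalences (the key step cancels the padding $x\!=\!y$ by the associativity of $\leftrightarrow$) gives the theorem, with both $m$ and $n$ depending on $\Lambda$. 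You instead route the self-reference through the bespoke recursive function $e(v)=\ulcorner\boldsymbol\delta(\overline{\mathrm{sub}(v)},\overline{0})\urcorner$, represent it by $\mathsf{E}$, and make the value defined by $\varphi$ switch between the two targets $0,1$ according to $\Lambda$. What your route buys: the verification is a transparent case analysis rather than an equivalence chain, and $n=0$ is fixed in advance independently of $\Lambda$ (only $m$ varies). What the paper's route buys: no custom function needs to be represented, only the standard diagonal map. Both are equally effective, which is the point of the theorem.

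One slip should be repaired. Your case split is phrased in the metatheory (``if $\Lambda(\overline{c})$ then $\textit{Q}\vdash\forall x[\varphi(x)\leftrightarrow x\!=\!\overline{0}]$\,''), which as literally written is wrong twice over: the truth of $\Lambda(\overline{c})$ does not make $\textit{Q}$ prove anything, and even granting both meta-conditionals, the provable biconditional $\textit{Q}\vdash\boldsymbol\delta(\overline{m},\overline{0})\leftrightarrow\Lambda(\overline{c})$ would not follow (e.g., from $\textit{Q}\vdash\boldsymbol\delta(\overline{m},\overline{0})$ alone one cannot conclude $\textit{Q}\vdash\boldsymbol\delta(\overline{m},\overline{0})\rightarrow\Lambda(\overline{c})$). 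The split must be carried out inside $\textit{Q}$, by excluded middle on $\Lambda(\overline{c})$: your displayed equivalence immediately yields $\textit{Q}\vdash\Lambda(\overline{c})\rightarrow\forall x[\varphi(x)\leftrightarrow x\!=\!\overline{0}]$ and $\textit{Q}\vdash\neg\Lambda(\overline{c})\rightarrow\forall x[\varphi(x)\leftrightarrow x\!=\!\overline{1}]$, whence, using $\textit{Q}\vdash\overline{0}\neq\overline{1}$, one gets $\textit{Q}\vdash\Lambda(\overline{c})\rightarrow\boldsymbol\delta(\overline{m},\overline{0})$ and $\textit{Q}\vdash\boldsymbol\delta(\overline{m},\overline{0})\rightarrow\Lambda(\overline{c})$. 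Since this internal reading follows at once from what you actually established, this is a presentational fix, not a gap in the construction.
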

\begin{proof}

\noindent
There is a formula $\boldsymbol\sigma(x,y)$ that strongly represents the diagonal function in $\textit{Q}$. That is to say that for every formula $\alpha(y)$ we have $\textit{Q}\vdash\forall x[\boldsymbol\sigma(\overline{a},x)
\!\leftrightarrow\!
x\!=\!\,\!^{\scriptsize\textbf{\textSFxxxix}}
\alpha(\overline{a})\,\!^{\scriptsize\textbf{\textSFxxv}}]$,
where  $a=\ulcorner\!\alpha\!\urcorner$; the sentence $\alpha(\overline{a})$ is called the diagonal of $\alpha$.
Let $\zeta(x,y)=[\boldsymbol\sigma(y,x)\!\rightarrow\!\Lambda(x)]
\!\leftrightarrow\!(x\!=\!y)$
and  $\tau(y)=\forall x\,[\zeta(x,y)\!\leftrightarrow\!x\!=\!y]$;
put $n=\ulcorner\!\tau\!\urcorner$. Also, let $\kappa(x)=\zeta(x,\overline{n})$ and put $m=\ulcorner\!\kappa\!\urcorner$. We show that $\boldsymbol\delta(\overline{m},\overline{n})
\!\leftrightarrow\!
\Lambda\big(\,\!^{\scriptsize\textbf{\textSFxxxix}}
\boldsymbol\delta(\overline{m},
\overline{n})\,\!^{\scriptsize\textbf{\textSFxxv}}\big)$
is provable in $\textit{Q}$. Note that $\boldsymbol\delta(\overline{m},
\overline{n})=\forall x[\kappa(x)\!\leftrightarrow\!
x\!=\!\overline{n}]=\forall x[\zeta(x,\overline{n})\!\leftrightarrow\!
x\!=\!\overline{n}]=\tau(\overline{n})\!=$ 
 the diagonal of $\tau$.
Thus,

\noindent
\begin{tabular}{rclcl}
  $\textit{Q}\vdash\boldsymbol\delta(\overline{m},
\overline{n})$
& $\longleftrightarrow$ & $\forall x[\zeta(x,\overline{n})\!\leftrightarrow\!
x\!=\!\overline{n}]$  & $\quad$  & by what was shown above, \\
& $\longleftrightarrow$ &  $\forall x\big[\big([\boldsymbol\sigma(\overline{n},x)
\!\rightarrow\!\Lambda(x)]\!\leftrightarrow\!
x\!=\!\overline{n}\big)\!\leftrightarrow\!
x\!=\!\overline{n}\big]$ & $\quad$  &   by the definition of $\zeta$, \\
& $\longleftrightarrow$ & $\forall x\big([\boldsymbol\sigma(\overline{n},x)
\!\rightarrow\!\Lambda(x)]\!\leftrightarrow\!
[x\!=\!\overline{n}
\!\leftrightarrow\!x\!=\!\overline{n}]\big)$  & $\quad$  & by the associativity of $\leftrightarrow$, \\
& $\longleftrightarrow$ & $\forall x[\boldsymbol\sigma(\overline{n},x)\!\rightarrow\!\Lambda(x)]$  & $\quad$  &  by logic, \\
& $\longleftrightarrow$ & $\forall x[x\!=\!\,\!^{\scriptsize\textbf{\textSFxxxix}}
\tau(\overline{n})\,\!^{\scriptsize\textbf{\textSFxxv}}
\!\rightarrow\!\Lambda(x)]$  & $\quad$  & 
by $n\!=\!\ulcorner\!\tau\!\urcorner$  and  the property of $\boldsymbol\sigma$, \\
& $\longleftrightarrow$ & $\forall x[x\!=\!\,\!^{\scriptsize\textbf{\textSFxxxix}}
\boldsymbol\delta(\overline{m},
\overline{n})\,\!^{\scriptsize\textbf{\textSFxxv}}
\!\rightarrow\!\Lambda(x)]$  & $\quad$  & by $\boldsymbol\delta(\overline{m},
\overline{n})=\tau(\overline{n})$ shown above, \\
& $\longleftrightarrow$ & $\Lambda\big(\,\!^{\scriptsize\textbf{\textSFxxxix}}
\boldsymbol\delta(\overline{m},
\overline{n})\,\!^{\scriptsize\textbf{\textSFxxv}}\big)$  & $\quad$  & by logic. \\
\end{tabular}

\vspace{-3.45ex}
\end{proof}

\end{document}